\newcommand{\R}{\mathbb{R}}
\newcommand{\N}{\mathbb{N}}
\newcommand{\Z}{\mathbb{Z}}
\newcommand{\sfa}{\mathsf{a}}
\newcommand{\sfk}{\mathsf{k}}
\definecolor{darkgreen}{rgb}{0.0, 0.65, 0.31}
\newtheorem{thm}{Theorem}[section]
\newtheorem{lem}[thm]{Lemma}
\newtheorem{prop}[thm]{Proposition}
\theoremstyle{definition}
\newtheorem{rmk}[thm]{Remark}
\newtheorem{exmp}[thm]{Example}
\numberwithin{equation}{section}
\begin{document}
\title[]{A convergence criterion for the unstable manifolds of the MacKay approximate renormalisation}
%\author[]{Seul Bee Lee \and Stefano Marmi \and Tanja I. Schindler}

\author{Seul Bee Lee}
\address{Centro di Ricerca Ennio De Giorgi, Scuola Normale Superiore, Piazza dei Cavalieri 3, 56126 Pisa, Italy}
\email{seulbee.lee@sns.it}

\author{Stefano Marmi}
\address{Scuola Normale Superiore, Piazza dei Cavalieri 7, 56126 Pisa, Italy}
\email{stefano.marmi@sns.it}

\author{Tanja I. Schindler}
\address{Faculty of Mathematics, University of Vienna, Oskar-Morgenstern-Platz 1, 1090 Vienna, Austria}
\email{tanja.schindler@univie.ac.it}

\date{\today}

\begin{abstract}
We give an explicit arithmetical condition which guarantees the existence of the unstable manifold of the MacKay approximate renormalisation scheme
for the breakup of invariant tori in one and a half degrees of freedom Hamiltonian systems,
correcting earlier results. 
Furthermore, when our condition is violated, we give an example of points on which the unstable manifold does not converge.
\end{abstract}

\keywords{invariant tori, approximate renormalisation, continued fractions}

\thanks{The first and the third author acknowledge the support of the Centro di Ricerca Matematica
Ennio de Giorgi. All authors acknowledge the support of UniCredit Bank R\&D group for financial support through the ‘Dynamics and Information Theory Institute’ at the Scuola Normale Superiore.
The second and the third author acknowledge the support through the PRIN Grant "Regular and stochastic behaviour in dynamical systems" (PRIN 2017S35EHN)}

\maketitle

\section{Introduction and statement of result}

In \cite{mackay_exact_1988} MacKay introduced an approximate renormalisation scheme for determining the breakup threshold of invariant tori 
in one and a half degrees of freedom Hamiltonian systems of the Escande-Doveil \cite{escande1981renormalization} type. Mackay's renormalisation scheme is rather simple, involving only two operators, leading to explicit formulas for the stable and unstable manifolds
of the renormalisation fixed point for arbitrary
rotation numbers. Remarkably, 
it turns out that the points of convergence for these manifolds are connected to convergence properties of interesting arithmetical functions, for details see Remark \ref{rmk: comment on conditions}.

The aim of this note is to give convergence properties of the unstable manifold introduced in \cite{mackay_exact_1988}. The properties of this unstable manifold were already considered in \cite{stark_unstable_1989}, however it turns out that the convergence properties stated there are slightly wrong. 

In the following we first introduce the necessary notations from \cite{mackay_exact_1988}, to which we refer the reader for the general 
discussion of the renormalisation scheme. We sometimes use the typeface $\mathsf{A},\mathsf{B},\mathsf{C}$ in order to not confuse Mackay's notations with other later used notations. 
First we look at the operators $\mathsf{D}\colon (0,1)\times \R_{> 1}\times \R^2\to \R_{>0}^4$ and $\mathsf{S}\colon \R_{>1} \times \R_{>1}\times \R^2\to \R_{>0}^4$ introduced for a four-dimensional space and given by 
\begin{align*}
 \mathsf{D}(\omega, \sfk,\mathsf{a},A)&= \left(\frac{1}{\omega}-1, \frac{1}{\sfk}+1, A, A+\sfa+u(\sfk)\right),\\
 \mathsf{S}(\omega, \sfk,\sfa,A)&= \Big(\omega-1, \sfk+1,  \sfa, A+\sfa+u(\sfk)\Big)
\end{align*}
with 
\begin{align*}
 u(\sfk)=\log\left(\frac{1}{2\pi^2}\cdot \frac{(1+\sfk)^6}{\sfk^3}\right).
\end{align*}
We can unify these two operators by introducing 
\begin{align*}
 \mathsf{T}(\omega, \sfk,\sfa,A)&=\begin{cases}
                    \mathsf{D}(\omega, \sfk,\sfa,A)&\text{ if }\omega\in (0,1)\\
                    \mathsf{S}(\omega, \sfk,\sfa,A)&\text{ if }\omega>1
                   \end{cases}
\end{align*}
and set $(\omega_v, \sfk_v,\sfa_v,A_v)=\mathsf{T}^v(\omega, \sfk,\sfa,A)$. (In \cite{mackay_exact_1988} the successive application of $\mathsf{T}$ is denoted by a decoding $\mathsf{T}_{0}, \mathsf{T}_{1}, \ldots= \ldots \mathsf{T}_{1}\circ \mathsf{T}_{0}$.)

In order to state the expression for the unstable manifold 
of the renormalisation scheme in \cite{mackay_exact_1988}
we note that 
\begin{align*}
 \mathsf{T}^{-1}(\omega, \sfk, \mathsf{a}, A)=\begin{cases}
                                            \mathsf{S}^{-1}(\omega, \sfk, \mathsf{a}, A)&\text{if }\sfk>2\\
                                            \mathsf{D}^{-1}(\omega, \sfk, \mathsf{a}, A)&\text{if }\sfk<2,                                            
                                           \end{cases}
\end{align*}
set $n_0 =0$ and for $j\in\Z_{<0}$ we set
\begin{align}\label{eq:n_j negative}
 n_{j}=n_{j}(\sfk)=\max\left\{i<n_{j+1}\colon \left(\mathsf{T}^{-1}\circ \mathsf{T}^{i+1}\right)(\omega,\sfk,\sfa,A)= \left(\mathsf{D}^{-1}\circ \mathsf{T}^{i+1}\right)(\omega,\sfk,\sfa,A)\right\},
\end{align}
see \cite[p.~254]{mackay_exact_1988}.
Then the unstable manifold is given by 
\begin{align}
 A_0-\sfk\, \sfa_0-\sum_{J=0}^{-\infty}\left[\prod_{j=-1}^J( -\sfk_{n_j}^{-1})\sum_{n_{J-1}\leq v< n_J} u(\sfk_v) \right]=0\label{eq: def unstable}
\end{align}
with the convention $\prod_{j=-1}^0 (-\sfk_{n_j}^{-1})=1$.
In order to state our main proposition we first introduce some notation from the theory of continued fractions. 
Here and in the following we denote by
\begin{align*}
 x=a_0(x)+\cfrac{1}{a_1(x)+\cfrac{1}{a_2(x)+\ddots+\cfrac{1}{a_n(x)+\cfrac{1}{\kappa}}}}=[a_0(x); a_1(x),a_2(x),\ldots, a_n(x)+{\kappa^{-1}}]
\end{align*}
the standard continued fraction expansion of a positive number $x$ calculated up to its $n$th convergent where $\kappa>1$. If $\kappa$ equals a natural number, then $\kappa=a_{n+1}(x)$ 
and $x$ is rational and has a finite continued fractions expansion. 
Furthermore, let $p_n(x)/q_n(x)$ be the $n$th principal convergent of a positive number $x$, i.e.\ $p_n(x)/q_n(x)=[a_0(x);a_1(x),\ldots,a_n(x)]$. 
With this notation we are able to state our main proposition.
\begin{prop}\label{prop: main prop}
 The sum of the unstable manifold converges if and only if 
 \begin{equation}
 \lim_{j\to\infty}\frac{\log q_{j+1}\left(\sfk_{n_{-1}}^{-1}\right)}{q_j\left(\sfk_{n_{-1}}^{-1}\right)}=0.\label{eq: criterion}
\end{equation}
\end{prop}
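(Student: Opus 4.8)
The plan is to translate the backward dynamics of $\mathsf{T}^{-1}$ on the $\sfk$–coordinate into the Gauss map, and then to read the sum in \eqref{eq: def unstable} as a summation by parts of the continued fraction of $\sfk_{n_{-1}}^{-1}$. First I would set up the dictionary. On the $\sfk$–coordinate $\mathsf{T}^{-1}$ acts by $\sfk\mapsto\sfk-1$ when $\sfk>2$ and by $\sfk\mapsto 1/(\sfk-1)$ when $\sfk<2$, so, writing $f:=\sfk_{n_{-1}}^{-1}\in(0,1)$ and letting $G$ be the Gauss map, an induction shows that the division indices satisfy $\sfk_{n_{-l}}^{-1}=G^{l-1}(f)=:x_{l-1}$, that the gaps $n_{-l}-n_{-l-1}$ equal the partial quotients $a_l=a_l(f)$, and that between two consecutive division steps the values $\sfk_v$ run through $1/x_l$ and $1+x_l,2+x_l,\dots,(a_l-1)+x_l$. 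Consequently the products telescope, $\prod_{l=1}^{m}\sfk_{n_{-l}}^{-1}=\prod_{k=0}^{m-1}x_k=\lvert q_{m-1}(f)f-p_{m-1}(f)\rvert=\bigl(q_m(f)+q_{m-1}(f)x_m\bigr)^{-1}$, which is comparable to $q_m(f)^{-1}$; since $q_m(f)$ grows at least geometrically in $m$, this weight decays geometrically.

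Next I would split the $J=-m$ summand of \eqref{eq: def unstable} along this run structure. Write $D_m:=(-1)^m\bigl(\prod_{k=0}^{m-1}x_k\bigr)u(1/x_m)$ for the contribution of the single division value $\sfk_{n_{-m-1}}=1/x_m$ and $U_m:=(-1)^m\bigl(\prod_{k=0}^{m-1}x_k\bigr)\sum_{i=1}^{a_m-1}u(i+x_m)$ for the contribution of the subtraction values, so that the summand equals $D_m+U_m$ (the $J=0$ term being a fixed finite constant). The arithmetically decisive point is that $1/x_m=a_{m+1}+x_{m+1}$, so the division value of block $m$ and the subtraction values of block $m+1$ assemble into the single consecutive run $1+x_{m+1},\dots,a_{m+1}+x_{m+1}$. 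Regrouping accordingly and using $\prod_{k=0}^{m}x_k=x_m\prod_{k=0}^{m-1}x_k$ with $x_m=1/(a_{m+1}+x_{m+1})$, I set $R_m:=D_m+U_{m+1}=(-1)^m\bigl(\prod_{k=0}^{m-1}x_k\bigr)\bigl[u(a_{m+1}+x_{m+1})-x_m\sum_{i=1}^{a_{m+1}-1}u(i+x_{m+1})\bigr]$. Inserting $u(\sfk)=3\log\sfk+O(1)$ and estimating the bracket by Stirling's formula, the two unbounded pieces—each of size $3\log a_{m+1}$—cancel and the bracket stays $O(1)$ uniformly in $m$; hence $\lvert R_m\rvert=O(q_m(f)^{-1})$ and $\sum_m R_m$ converges absolutely.

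Finally I would conclude by the telescoping identity. Since the $J=-m$ summand equals $D_m+U_m$ while $R_m=D_m+U_{m+1}$, the partial sums obey $\sum_{m=1}^{M}(D_m+U_m)=\sum_{m=1}^{M}R_m-(U_{M+1}-U_1)$, the $U$–terms telescoping. As $\sum_m R_m$ converges and $U_1$ is fixed, the sum in \eqref{eq: def unstable} converges if and only if $U_{M+1}$ has a limit as $M\to\infty$. But $U_{M+1}$ equals $(-1)^{M+1}$ times the nonnegative quantity $\bigl(\prod_{k=0}^{M}x_k\bigr)\sum_{i=1}^{a_{M+1}-1}u(i+x_{M+1})$, so, by alternation of sign, it converges exactly when that quantity tends to $0$; and by the product formula together with $u(\sfk)=3\log\sfk+O(1)$ and Stirling this quantity is comparable to $\tfrac{\log a_{M+1}}{q_M(f)}$, hence tends to $0$ if and only if $\tfrac{\log q_{M+1}(f)}{q_M(f)}$ does, because $\log q_{M+1}(f)=\log a_{M+1}+\log q_M(f)+O(1)$. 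With $f=\sfk_{n_{-1}}^{-1}$ this is precisely \eqref{eq: criterion}.

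I expect the routine parts to be the continued fraction dictionary of the first paragraph and the geometric decay of $q_m(f)^{-1}$. The main obstacle is the cancellation in the bracket defining $R_m$: one must show that $u(1/x_m)\sim 3\log a_{m+1}$ and $x_m\sum_{i=1}^{a_{m+1}-1}u(i+x_{m+1})\sim 3\log a_{m+1}$ cancel up to a bounded remainder, uniformly in the fractional part $x_{m+1}\in[0,1)$ and in the possibly enormous $a_{m+1}$. It is exactly this cancellation that turns the (only conditionally convergent) alternating series into one whose convergence is governed by the single boundary term $U_{M+1}$; treating the series as absolutely convergent—and thereby requiring summability of $\tfrac{\log q_{M+1}}{q_M}$ rather than merely its decay—appears to be the source of the discrepancy with \cite{stark_unstable_1989}.
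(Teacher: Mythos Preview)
Your argument is correct and follows essentially the same route as the paper: both set up the Gauss-map dictionary for the backward $\sfk$-dynamics, perform Stark's regrouping (your bracket $u(1/x_m)-x_m\sum_{i=1}^{a_{m+1}-1}u(i+x_{m+1})$ is precisely Stark's $\nu(\sfk_{n_{-(m+1)}})$), show this quantity is $O(1)$ so the regrouped series converges absolutely, and then identify the boundary term that governs convergence of the original series. Your explicit partial-sum telescoping $\sum_{m=1}^{M}(D_m+U_m)=\sum_{m=1}^{M}R_m+U_1-U_{M+1}$ is in fact a bit cleaner than the paper's phrasing ``the reordering is only possible if \eqref{eq: conv cond product}'': the paper records two boundary conditions (both parts of \eqref{eq: conv cond product}, handled by parts \eqref{en: crit1} and \eqref{en: crit2} of Lemma~\ref{lem: criterion}), while your telescoping isolates the single term $U_{M+1}$, whose alternating sign immediately reduces convergence to $|U_{M+1}|\to 0$.

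Two small remarks. First, ``comparable to $\tfrac{\log a_{M+1}}{q_M}$'' is slightly imprecise because of the $O(a_{M+1})$ remainder in the Stirling estimate; what you actually have is $|U_{M+1}|=3\beta_M a_{M+1}\log a_{M+1}+O(\beta_M a_{M+1})$, and since $\beta_M a_{M+1}\asymp q_M^{-1}\to 0$ unconditionally (this is the paper's Lemma~\ref{lem: criterion}\eqref{en: 1a}), the conclusion $|U_{M+1}|\to 0 \Leftrightarrow \log a_{M+1}/q_M\to 0$ stands. Second, your closing diagnosis of Stark's error is off: he did not impose the Brjuno condition but rather claimed convergence for \emph{all} irrationals; his oversight was to regroup formally and observe that $\sum R_m$ converges absolutely without tracking the boundary term $U_{M+1}$ at all.
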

 Before giving the proof of this proposition we will first give some remarks on this convergence property and the convergence property of the stable manifold.

 \begin{rmk}\label{rmk: comment on conditions}
Eq.\ \eqref{eq: criterion} is a criterion which has also applications in the theory of small divisor problems: 
indeed, it is the optimal arithmetic condition that guarantees that the cohomological equation associated with an irrational rotation of the circle has an analytical solution whatever the analytical datum given is: see, e.g.\ \cite[Prop.~A3.4]{marmi2000introduction}.

  In \cite{stark_unstable_1989} it was erroneously stated that the sum of the unstable manifold converges for each irrational number: the above proposition corrects this and in Example \ref{ex: main ex} we will give an explicit example of points for which the unstable manifold does not converge. 

We also want to emphasize that the convergence of the stable manifold adheres to a related but stronger condition. Namely, in \cite{mackay_erratum_1989} it is stated that the sum of the stable manifold (not given in these notes) converges if and only if $\omega_1$ is a Brjuno number, i.e.\ if 
$\sum_{j=1}^{\infty}\frac{\log q_{j+1}(\omega)}{q_j(\omega)}<\infty$.
\end{rmk}

Before giving the proof of the above proposition, we will first introduce $G(x)=1/x-\lfloor 1/x\rfloor$ and $\beta_j(x)=\prod_{i=0}^j G^i (x)$ 
for $j \geq 0$ with the convention $\beta_{-1}(x) = 1$ and give a lemma, describing some convergence properties in the language of continued fractions.
In this lemma, we omit the dependence on $x$, implicitly assuming dependence on the same number for each term. 
\begin{lem}\label{lem: criterion}
We have the following two statements:
\begin{enumerate}
  \item\label{en: 1a} 
 We have $\sum_{j=1}^{\infty}\beta_{j-1} a_j<\infty$.
 \item\label{en: 3} 
 Eq.\ \eqref{eq: criterion} is equivalent to each of the following two statements:
 \begin{enumerate}
  \item\label{en: crit1} $\lim_{j\to\infty}\beta_{j-1} \log a_{j+1}=0$,
  \item\label{en: crit2} $\lim_{j\to\infty}\beta_{j-1} a_{j} \log a_{j}=0$.
 \end{enumerate}
 \end{enumerate}
\end{lem}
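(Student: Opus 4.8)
The plan is to translate the three conditions into the language of the Gauss-map iterates $x_i:=G^i(x)$, so that $\beta_j(x)=\prod_{i=0}^{j}G^i(x)=x_0x_1\cdots x_j$, and to lean on two elementary identities from the metric theory of continued fractions. First I would record the classical fact $\beta_j=|q_jx-p_j|$, which follows from $x=(p_j+p_{j-1}x_j)/(q_j+q_{j-1}x_j)$ together with $q_jp_{j-1}-p_jq_{j-1}=(-1)^j$; combined with the recursion $x_j=1/(a_{j+1}+x_{j+1})$ this gives the closed form
\[
\beta_j=\frac{1}{q_{j+1}+q_jx_{j+1}}.
\]
Since $0<x_{j+1}<1$ and $q_j\le q_{j+1}$, this yields the two-sided bound $\tfrac{1}{2q_{j+1}}<\beta_j<\tfrac{1}{q_{j+1}}$, i.e.\ $\beta_j\asymp q_{j+1}^{-1}$ with constants independent of $j$. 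The second estimate, $\tfrac{1}{a_{j+1}+1}<x_j<\tfrac{1}{a_{j+1}}$, is immediate from the same recursion. Throughout I assume $x$ irrational, so that the expansion is infinite and $q_j\to\infty$; the rational case makes all the sums and limits finite and trivial.

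For statement (1) I would use $a_j=\lfloor 1/x_{j-1}\rfloor\le 1/x_{j-1}$, which gives
\[
\beta_{j-1}a_j=(x_0\cdots x_{j-1})\,a_j\le \frac{x_0\cdots x_{j-1}}{x_{j-1}}=\beta_{j-2}.
\]
Because $q_j\ge F_j$ grows at least geometrically, the bound $\beta_{j-2}<q_{j-1}^{-1}$ furnishes a summable majorant, so $\sum_{j\ge1}\beta_{j-1}a_j\le\sum_{j\ge1}\beta_{j-2}<\infty$.

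For the equivalence of \eqref{eq: criterion} with condition (a), I would write $q_{j+1}=a_{j+1}q_j+q_{j-1}$, giving $\log a_{j+1}+\log q_j\le\log q_{j+1}\le\log(a_{j+1}+1)+\log q_j$. Dividing by $q_j$ and noting that $\tfrac{\log q_j}{q_j}\to0$ (as $q_j\to\infty$) together with $\log(a_{j+1}+1)\le\log 2+\log a_{j+1}$, I would conclude that $\tfrac{\log q_{j+1}}{q_j}\to0$ if and only if $\tfrac{\log a_{j+1}}{q_j}\to0$; the comparison $\beta_{j-1}\asymp q_j^{-1}$ then upgrades the latter to $\beta_{j-1}\log a_{j+1}\to0$, which is exactly (a).

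Finally, for (a)$\iff$(b), I would reindex condition (b) by $j\mapsto j+1$, so that it reads $\beta_j a_{j+1}\log a_{j+1}\to0$, and then use $\beta_j=\beta_{j-1}x_j$ with $\tfrac{1}{a_{j+1}+1}<x_j<\tfrac{1}{a_{j+1}}$ to obtain $\tfrac12\beta_{j-1}<\beta_j a_{j+1}<\beta_{j-1}$, i.e.\ $\beta_j a_{j+1}\asymp\beta_{j-1}$. Hence $\beta_j a_{j+1}\log a_{j+1}\asymp\beta_{j-1}\log a_{j+1}$ and the two limits vanish together. I expect the main obstacle to be bookkeeping rather than ideas: one must keep the two-sided estimates $\beta_j\asymp q_{j+1}^{-1}$ and $\beta_j a_{j+1}\asymp\beta_{j-1}$ uniform in $j$ so that every "$\asymp$" is a genuine squeeze, and one must check that the harmless terms with $a_j=1$ (where $\log a_j=0$) cause no trouble in the reindexing.
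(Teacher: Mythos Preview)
Your proof is correct and rests on the same standard continued-fraction identities as the paper's (namely $\beta_{j}\asymp q_{j+1}^{-1}$, the recursion $q_{j+1}=a_{j+1}q_j+q_{j-1}$, and $\beta_j a_{j+1}\asymp\beta_{j-1}$). The only difference is organizational: the paper first links \eqref{eq: criterion} directly to (b) via the somewhat heavier two-sided estimate \eqref{eq: brjuno1} and then deduces (a)$\iff$(b) from $\beta_{j-2}=a_j\beta_{j-1}+\beta_j$, whereas you first link \eqref{eq: criterion} to (a) via the cleaner bounds $a_{j+1}q_j\le q_{j+1}\le (a_{j+1}+1)q_j$ and then pass to (b); your ordering is arguably more transparent but the substance is the same.
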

\begin{proof}
To show \eqref{en: 1a}, we use $q_n=a_n\,q_{n-1}+q_{n-2}$ (see \cite[Eq.\ (1.17)]{MMY1}) and $\frac{1}{2}< q_{n+1}\,\beta_{n}<1$ (see \cite[Prop.~1.4 (iii)]{MMY1}) and obtain 
$\sum_{j=1}^{\infty}\beta_{j-1} a_j
 <\sum_{j=1}^{\infty} \frac{1}{q_{j-1}}<\infty$.

To show \eqref{en: 3},
we apply $q_n=a_n\,q_{n-1}+q_{n-2}$, $\frac{1}{2}< q_{n+1}\,\beta_{n}<1$,
and then two times $q_{j}\ge 2 q_{j-2}$ (which can be obtained from $q_n=a_n\,q_{n-1}+q_{n-2}$)
and obtain
\begin{align}
\MoveEqLeft\beta_{j-1}\cdot  a_{j}\cdot \log a_{j}\notag\\
&> \frac{1}{2}\cdot\frac{q_{j}- q_{j-2}}{q_{j} q_{j-1}}
\cdot \left(\log \left(q_{j}-q_{j-2}\right)-\log q_{j-1}\right)
%>\frac{1}{4}\cdot\frac{1}{q_{j-1}}
% \cdot \left(\log \left(q_{j}-q_{j-2}\right)-\log q_{j-1}\right)\notag\\
% &= \frac{1}{4}\left[\frac{\log q_{j} + \log\left(1-\frac{q_{j-2}}{q_{j}}\right)}{q_{j-1}}
% - \frac{\log q_{j-1}}{q_{j-1}}\right]
>\frac{1}{4}\left[\frac{\log q_{j}}{q_{j-1}}
- \frac{\log q_{j-1}-\log\left(\frac{1}{2}\right)}{q_{j-1}}\right]
\label{eq: brjuno1}
\end{align}
and on the other hand 
$\beta_{j-1}\cdot  a_{j}\cdot \log a_{j}
 <%\frac{q_{j}\cdot \log q_{j}}{q_{j}\cdot q_{j-1}}
 \frac{\log q_{j}}{q_{j-1}}$.
Then the equivalence between \eqref{eq: criterion} and \eqref{en: crit2} follows by the last two equations. 
 Furthermore, by $\beta_{j-2} = a_{j} \beta_{j-1}+\beta_j$ (see \cite[Eq.~(1.22)]{MMY1}), we have 
 $\beta_{j-1} a_{j} \log a_{j}=\left(\beta_{j-2}-\beta_j\right)\log a_j$. Since  $\lim_{j\to\infty}\beta_j\log a_j<\lim_{j\to\infty} \log q_{j}/q_{j+1}=0$, we have $\lim_{j\to\infty}\beta_j\log a_j=0$ and we also obtain the equivalence between \eqref{en: crit1} and \eqref{en: crit2}. 
\end{proof}
 
\begin{proof}[Proof of Proposition \ref{prop: main prop}]
In \cite[p.\ 215]{stark_unstable_1989} it is claimed that in case $\sfk_0\in (1,2)$ we have
 \begin{align*}
  \sum_{J=0}^{-\infty}\left[\prod_{j=-1}^J( -\sfk_{n_j}^{-1})\sum_{n_{J-1}\leq v< n_J} u(\sfk_v) \right]
  &=\sum_{J=-1}^{-\infty}\left[\mathsf{\nu}\left(\sfk_{n_J}\right)\prod_{j=-1}^{J+1}( -\sfk_{n_j}^{-1}) \right]
 \end{align*}
 with $\mathsf{\nu}(k)=u(k)-U(k)/k$ and $U(k)=u(k-1)+\ldots +u(k-n)$ such that $k-n\in (1,2)$. 
 However, the proof in \cite{stark_unstable_1989} uses a reordering of an alternating sum which is only possible if the summands tend to zero. 
 If this is the case we can verify his claim as follows: 
 We have $\sum_{n_{J-1}< v< n_J} u(\sfk_v)=U(\sfk_{n_J})$ and thus, 
 \begin{align}
  \MoveEqLeft\sum_{J=0}^{-\infty}\left[\prod_{j=-1}^J( -\sfk_{n_j}^{-1})\sum_{n_{J-1}\leq v< n_J} u(\sfk_v) \right]
  =\sum_{J=0}^{-\infty}\left[\prod_{j=-1}^{J}( -\sfk_{n_j}^{-1})\left( u(\sfk_{n_{J-1}})+ U(\sfk_{n_{J}})\right) \right]\notag\\
  &=\sum_{J=0}^{-\infty}\left[\prod_{j=-1}^{J}( -\sfk_{n_j}^{-1})\, u(\sfk_{n_{J-1}})\right]+ \sum_{J=0}^{-\infty}\left[\prod_{j=-1}^{J}( -\sfk_{n_j}^{-1})\, U(\sfk_{n_{J}}) \right]\label{eq: two Wilton sums}\\
  &=\sum_{J=-1}^{-\infty}\left[\prod_{j=-1}^{J+1}( -\sfk_{n_j}^{-1})\, u(\sfk_{n_{J}})\right]
  + U(\sfk_{n_{0}})
  -\sum_{J=-1}^{-\infty}\left[\prod_{j=-1}^{J+1}( -\sfk_{n_j}^{-1})\, \frac{U(\sfk_{n_{J}})}{\sfk_{n_{J}}} \right]\notag\\
  &=U(\sfk_{n_0})+\sum_{J=-1}^{-\infty}\left[\prod_{j=-1}^{J+1}( -\sfk_{n_j}^{-1}) \nu\left(\sfk_{n_{J}}\right) \right]\notag
 \end{align}
and $U(\sfk_{n_0})=0$ if $\sfk_{n_0}\in (1,2)$. 
However, as mentioned before we have reordered the sum in the second equality. This reordering is only possible if 
\begin{equation}\label{eq: conv cond product}
 \lim_{J\to-\infty}\left(\prod_{j=-1}^{J}\sfk_{n_j}^{-1}\right) u(\sfk_{n_{J-1}})=0
 \quad\text{ and }\quad 
 \lim_{J\to-\infty}\left(\prod_{j=-1}^{J}\sfk_{n_j}^{-1}\right) U(\sfk_{n_{J}})=0.
\end{equation}
If this convergence holds the remaining part of the proof follows as in \cite{stark_unstable_1989}.

On the other hand, it also becomes immediately clear that the sum can only converge if \eqref{eq: conv cond product} holds. 
Hence, we are left to show that \eqref{eq: conv cond product} is equivalent to \eqref{eq: criterion}. 
If we set, $m_J:=n_{J+1}-n_J$, then by \eqref{eq:n_j negative} we have
\begin{align*}
 \mathsf{T}^{n_J}
 &=\mathsf{D}^{-1} \mathsf{S}^{-m_J+1}\mathsf{D}^{-1} \mathsf{S}^{-m_{J+1}+1}\ldots \mathsf{D}^{-1} \mathsf{S}^{-m_{-1}+1},
\end{align*}
for $J\leq 0$. This also implies 
\begin{equation}\label{eq: k0 CF}
 \sfk_0 = [m_{-1};m_{-2},m_{-3},\cdots].
\end{equation}

For the first term in \eqref{eq: conv cond product} we obtain by using $\sfk_{n_J}\in (m_{J-1}, m_{J-1}+1)$
and the definition of $u$ that 
$u(\sfk_{n_{J-1}})= 3\log m_{J-2}+o_{m_{J-2}}(1)$ 
where $o_{\log m_{J-2}}(1)$ is understood as $o(1)$ with respect to $m_{J-2}( \sfk_0)$ being large. 
Furthermore, we have by \eqref{eq: k0 CF} that $m_{J}= a_{-J}(\sfk_0^{-1})=a_{-J-1}(\sfk_{n_{-1}}^{-1})$, for $J<0$.
On the other hand, we note that 
$\pi_2\left(\mathsf{D}^{-1}\mathsf{S}^{-\lfloor \sfk-1\rfloor}\right)(\omega, \sfk, \mathsf{a}, A)
=(\sfk- \lfloor \sfk\rfloor)^{-1}$
and we have 
$\sfk_{n_{j-1}}^{-1}=1/ \sfk_{n_j}^{-1} - \lfloor 1/ \sfk_{n_j}^{-1}\rfloor= G\big(\sfk_{n_j}^{-1}\big)$
which implies 
\begin{equation}
 \prod_{j=-1}^J( -\sfk_{n_j}^{-1})=(-1)^J\, \beta_{-J-1}\left(\sfk_{n_{-1}}^{-1}\right).
\end{equation}
Hence, 
\begin{equation*}
 \lim_{J\to-\infty}\left(\prod_{j=-1}^{J}\left(-\sfk_{n_j}^{-1}\right)\right) u(\sfk_{n_{J-1}})
 =\lim_{J\to-\infty} 3\,   (-1)^J\, \beta_{-J-1}\left(\sfk_{n_{-1}}^{-1}\right)\, \log a_{-J+1}(\sfk_{n_{-1}}^{-1})
\end{equation*}
and by Lemma \ref{lem: criterion} \eqref{en: 3} the first term in \eqref{eq: conv cond product} converges if and only if \eqref{eq: criterion} holds. 

Next, we consider the second term in \eqref{eq: conv cond product}.
From the definition of $u$ we immediately get
\begin{equation}\label{eq: MacKay sum}
 U(\sfk_{n_J})=\sum_{n_{J-1}< v < n_{J}} u(\sfk_v) = (a_{-J}(\sfk_{n_{-1}}^{-1})-1) \log \frac{1}{2\pi^2} + 3 \log \prod_{n_{J-1}< v< n_J} \frac{(1+\sfk_v)^2}{\sfk_v}.
\end{equation}
Since $\sfk_{n_{J-1}+i} = 1/\sfk_{n_{J-1}} + i$ for $i=1,\cdots, m_{J-1}-1$, we have
\begin{align*}
 \prod_{n_{J-1}< v< n_{J}} \frac{(1+\sfk_v)^2}{\sfk_v}
&= (\sfk_{n_{J-1}}^{-1}+1)^{-1} (\sfk_{n_{J-1}}^{-1}+2)\, (\sfk_{n_{J-1}}^{-1}+3)\cdots \\
&\qquad \cdot (\sfk_{n_{J-1}}^{-1}+a_{-J}(\sfk_{n_{-1}}^{-1})-1)(\sfk_{n_{J-1}}^{-1}+{a_{-J}}(\sfk_{n_{-1}}^{-1}))^2
=: S_J.
\end{align*}
This implies 
$$\frac{1}{2}\cdot a_{-J}(\sfk_{n_{-1}}^{-1})\cdot \left(a_{-J} (\sfk_{n_{-1}}^{-1})\right) ! \le S_J \le   (a_{-J}(\sfk_{n_{-1}}^{-1})+1)\cdot (a_{-J}(\sfk_{n_{-1}}^{-1})+1)!.$$
Using Stirling's formula yields 
$\log S_J= a_{-J}(\sfk_{n_{-1}}^{-1})\,\log a_{-J}(\sfk_{n_{-1}}^{-1})+O_{a_{-J}}(a_{-J}(\sfk_{n_{-1}}^{-1}))$. 
Combining this with \eqref{eq: MacKay sum} yields 
\begin{align*}
 \MoveEqLeft\left(\prod_{j=-1}^{J}-\sfk_{n_j}^{-1}\right) U(\sfk_{n_{J}})\\
 &=(-1)^J\, \beta_{-J-1}\left(\sfk_{n_{-1}}^{-1}\right) \left( 3a_{-J}\left(\sfk_{n_{-1}}^{-1}\right)\cdot \log a_{-J}\left(\sfk_{n_{-1}}^{-1}\right) +O_{a_{-J}}\left(a_{ -J}\left(\sfk_{n_{-1}}^{-1}\right)\right)\right).
\end{align*}
By Lemma \ref{lem: criterion}-\eqref{en: 1a}, we have that the sum over $\beta_{-J-1}O_{a_{-J}}(a_{-J})$ and converges absolutely. 
Applying Lemma \ref{lem: criterion}-\eqref{en: 3}
yields that \eqref{eq: criterion} is a necessary condition for \eqref{eq: def unstable} to converge.
\end{proof}
 
\begin{exmp}\label{ex: main ex}
 Criterion \eqref{eq: criterion} is a relatively weak condition. However, it is not too difficult to construct a number $\sfk_{n_{-1}}^{-1}$ in terms of its continued fraction entries $(a_i)=\big(a_i(\sfk_{n_{-1}}^{-1})\big)_{i\in\N}$ such that \eqref{eq: criterion} does not hold. Take for example $a_1=1$ and $a_i=2^{a_{i-1}^2}$. 
 Then we clearly have $q_{j+1}\geq a_{j+1}$. On the other hand, we have 
 $q_j\leq \left(a_{j}+1\right)q_{j-1}\leq \prod_{i=1}^{j}\left(a_i+1\right)$. 
 Thus, using $a_{i-1}=\left(\log a_{i}/\log 2\right)^{1/2}\leq \log a_{i}$ we obtain
 \begin{align*}
  \frac{\log q_{j+1}}{q_j}
  &\geq \frac{ \log a_{j+1}}{\prod_{i=2}^{j}\left(a_i+1\right)}
  \geq \frac{ a_{j}^2 \log 2 }{2^j a_j (\log a_j) (\log\log a_j) \dots (\log\log\cdots \log a_j) }
  \geq a_{j}^{1/2}, 
 \end{align*}
where $\log$ is iterated $j-2$ times in the last factor of the denominator of the third term and the last inequality holding for $j$ being large. This term obviously does not converge to zero. 
 
 However, if we choose $a_1=1$ and $a_i=2^{a_{i-1}}$ instead, then $q_{j+1}\leq (a_{j+1}+1)q_j$ and $q_j\geq a_{j} q_{j-1}\geq \prod_{i=0}^{j} a_i$
  imply
 \begin{align*}
  \frac{\log q_{j+1}}{q_j}
  &\leq \frac{ \log (a_{j+1}+1)+\log q_j}{q_j}
  \leq \frac{ \sum_{i=1}^{j+1}\log (a_i+1) }{\prod_{i=1}^{j} a_i}
  \lesssim \frac{ \sum_{i=1}^{j}a_{i}\log 2}{\prod_{i=1}^{j} a_i},
 \end{align*}
which tends to zero for $j$ tending to infinity.
Given how fast at least some of the continued fraction entries have to grow such that \eqref{eq: criterion} is not fulfilled it is very difficult to detect those points numerically.
\end{exmp}

\section*{Acknowledgments}
This article was inspired by R.S.\ Mackay's observations after a Stony Brook University Renormalisation online seminar given by the second author on Brjuno and Wilton functions, see \cite{W, lee_brjuno_2021}. 
Indeed, it turns out that in \eqref{eq: two Wilton sums} there are two Wilton sums which cancel each other out. We are very grateful to R.S. Mackay for his suggestions and further discussions.

\end{document}